\documentclass[11pt]{amsart}
\usepackage{amsmath,amssymb,amsthm}
\usepackage[margin=1.25in]{geometry}
\usepackage{xcolor}
\newtheorem{theorem}{Theorem}[section]
\newtheorem{lemma}[theorem]{Lemma}
\newtheorem{proposition}[theorem]{Proposition}
\newtheorem{corollary}[theorem]{Corollary}
\theoremstyle{definition}
\newtheorem{definition}[theorem]{Definition}
\newtheorem{example}[theorem]{Example}
\theoremstyle{remark}
\newtheorem{remark}[theorem]{Remark}

\newcommand{\cB}{\mathcal B}

\newcommand{\D}{\mathbb D}
\newcommand{\T}{\mathbb T}
\newcommand{\RR}{\mathbb R}

\DeclareMathOperator{\Area}{Area}
\newcommand{\norm}[1]{\lVert #1\rVert}

\title[Hyponormal $\beta(H)$-Class Operators]{Quasinormal and hyponormal operators in the $\beta(H)$-class}
\author[R. Golla]{Ramesh Golla}
\address{Department of Mathematics\\IIT Hyderabad, Kandi\\ Sangareddy, Telangana, 502 284\\ India.}
\email{rameshg@math.iith.ac.in}
\author[H. Osaka]{Hiroyuki Osaka}
\address{ Department of Mathematical Sciences\\ Ritsumeikan University\\ Kusatsu, Shiga, 525-8577, Japan.}

\email[]{osaka@se.ritsumei.ac.jp}

\author[S. S. Sequeira]{Shanola S. Sequeira}
\address{School of Mathematics and Computer Science, IIT Goa, Farmagudi, Goa-403401, India}
\email[]{shanolasequeira@gmail.com}

\keywords{norm attaining operator, reducing
	subspace, hyponormal operator}
\subjclass{47A15, 47A10, 47B20}

\date{October 1, 2026}

\begin{document}

\begin{abstract}
Following \cite{BR} we study
hyponormal operators in the strictly larger class $\beta(H)$ of operators
whose restriction to every non-zero reducing subspace is norm
attaining. The Weyl-type characterization of positive operators via the
essential spectrum used in the absolutely norm attaining case is replaced by a
well-ordering condition on the eigenvalues (Theorem~\ref{thm:positive}). We
prove a transfinite, block-diagonal representation $T=\bigoplus_i\lambda_iS_i$
(with isometries $S_i$) for the $\beta(H)$-operators whose $|T|$-eigenspaces are
reducing, equivalently the quasinormal operators with $|T|\in\beta(H)$, and we
show by examples that hyponormality alone does not give such a decomposition.
For normal operators we obtain a complete representation, and for hyponormal
$\beta(H)$-operators we show that normality is equivalent to the spectrum having
planar Lebesgue measure zero. An analogous area-zero criterion for normality is proved for hyponormal operators attaining the minimum modulus on every non-zero reducing subspace.
\end{abstract}

\maketitle

\section{Preliminaries}
The main goal of this note is to describe the class of hyponormal operators that attain their norm on every non-zero reducing subspace. The class of operators that attain their norm on every non-zero reducing subspace is an important subclass of norm-attaining operators. This class, known as the class of $\beta(H)$-class operators, was introduced in \cite{RO21}. Further characterizations and structural results for normal and quasinormal operators in this class were established in \cite{RO22}. An important feature of the $\beta(H)$-class is that every bounded linear operator on a Hilbert space can be approximated by $\beta(H)$-class operators having a non-zero invariant subspace. We refer to \cite{ROS26} for further details and additional results concerning the existence of non-zero hyperinvariant subspaces.

The $\beta(H)$-class contains another important subclass of norm-attaining operators, namely, the class of absolutely norm-attaining operators, or $\mathcal{AN}$-operators. The class of $\mathcal{AN}$-operators has been extensively studied. We refer to \cite{CN,PP,Ram18,VR19} for results concerning their structure, representation, and spectral characterizations, particularly in the cases of positive, self-adjoint, and normal operators. However, the class of $\mathcal{AN}$-operators is not dense in the space of bounded operators with respect to the operator norm. We refer to \cite{RS23} for further details on the operator-norm closure of the class of $\mathcal{AN}$-operators. Our work is motivated in part by the structural results for hyponormal $\mathcal{AN}$-operators obtained in \cite{BR}.

It is well known, as a consequence of Putnam's inequality, that a hyponormal operator whose spectrum has planar area zero is normal. Although the converse does not hold in general, we prove that, within the class of $\beta(H)$-class hyponormal operators, normality is equivalent to the spectrum having planar area zero.

There is a natural counterpart of the $\beta(H)$-class obtained by requiring attainment of the minimum modulus on every non-zero reducing subspace. We refer to \cite{NR26} for results concerning this class. Motivated by this viewpoint, we revisit the characterization of positive operators in the $\beta(H)$-class. In contrast to the usual sequence-based description, the structure of positive $\beta(H)$-class operators naturally involves countable ordinals. We establish this structure first and then use it to obtain a representation theorem for hyponormal $\beta(H)$-class operators.

Throughout, $H$ is an infinite dimensional, complex, separable Hilbert space,
and $\cB(H)$ denotes the bounded linear operators on $H$. For $T\in\cB(H)$
we write the null space and the range spaces of $T$ by $N(T)$ and $R(T)$, respectively. The adjoint of $T$ will be denoted by $T^*$, the spectrum of $T$ by $\sigma(T)$. The modulus of $T$ is defined by  $|T|=(T^*T)^{1/2}$, and
$\sigma_p(T)$ for the set of eigenvalues. For a self-adjoint $S$,
$\sigma_{ess}(S)$ is the set of points of $\sigma(S)$ that are not isolated
eigenvalues of finite multiplicity \cite{Sch}. 

A closed subspace $M$ is
\emph{reducing} for $T$ if $TM\subseteq M$ and $T^*M\subseteq M$, equivalently
if the orthogonal projection onto $M$ commutes with $T$. Orthogonal complements
and closed orthogonal sums of reducing subspaces are reducing, and a reducing
subspace of $T|_M$, for $M$ reducing, is reducing for $T$. A bounded operator $T$ is
called \emph{hyponormal} if $TT^*\le T^*T$ and \emph{quasinormal} if $T$ commutes with
$T^*T$; quasinormal operators are  hyponormal.

If $\{H_i\}$ is a countable family of Hilbert spaces and $T_i\in\cB(H_i)$ with
$\sup_i\norm{T_i}<\infty$, then $\bigoplus_iT_i$ acts coordinatewise on
$\bigoplus_iH_i$ and $\norm{\bigoplus T_i}=\sup\norm{T_i}$ \cite{Con}.

An operator $T\in\cB(H)$ is \emph{norm attaining} if $\norm{Tx}=\norm T\norm x$
for some $x\in H\setminus\{0\}$.

\begin{definition}\label{def:beta}\cite{RO21}
$T\in\cB(H)$ is a \emph{$\beta(H)$-class operator}, written $T\in\beta(H)$, if
$T|_M\colon M\to M$ is norm attaining for every non-zero closed reducing
subspace $M$ of $T$.
\end{definition}

Since $H$ itself is reducing, every $T\in\beta(H)$ is norm attaining.
Recall that $T$ is \emph{absolutely norm attaining}, $T\in\mathcal{AN}(H)$, if
$T|_M\colon M\to H$ is norm attaining for \emph{every} non-zero closed
subspace $M$ \cite{CN}. For a reducing subspace $M$ we have $TM\subseteq M$, so
the two tests ``$T|_M\colon M\to H$ attains its norm'' and ``$T|_M\colon M\to M$
attains its norm'' coincide. Hence
\[
\mathcal{AN}(H)\subseteq\beta(H);
\]
the inclusion is strict (See~ \cite{RO21} for more details).

\begin{remark}
    If $T\in \mathcal B(H)$ is irreducible, that is , $T$ has no reducing subspaces other than ${\{0}\}$ and $H$, then $T\in \beta(H)$ if and only if $T$ is norm attaining. 
\end{remark}

We refer to \cite{RO21,RO22} for the structure of positive operators in
$\beta(H)$. Similar to the denseness of norm attaining operators, the $\beta(H)$-class of operators is dense in $\mathcal B(H)$ with respect to the operator norm. In fact, more is true;  a bounded operator can be approximated in operator norm by a
$\beta(H)$-class operator having a  non-trivial invariant subspace. We refer to \cite{ROS26} for more details of this result and concerning hyperinvariant subspaces of $\beta(H)$-class operators.

In the present article we give a representation theorem for hyponormal $\beta(H)$-class operators in the spirit of \cite{BR}. Since hyponormality alone
does not force a block-diagonal structure (Examples~\ref{ex:shift} and
\ref{ex:sum}), our decomposition (Theorem~\ref{thm:main}) is stated for those
$\beta(H)$-operators whose $|T|$-eigenspaces reduce $T$; in particular it covers
all normal $\beta(H)$-operators. As a consequence we obtain a
description of normal operators in $\beta(H)$ and a normality criterion for
hyponormal $\beta(H)$-operators. Finally, we prove the corresponding results for the class minimum attaining operators on every non zero reducing subspace.  We refer to \cite{NR26} for results concerning this class. Motivated by this viewpoint, we revisit the characterization of positive operators in the $\beta(H)$-class. In contrast to the usual sequence-based description, the structure of positive $\beta(H)$-class operators naturally involves countable ordinals. We establish this structure first and then use it to obtain a representation theorem for hyponormal $\beta(H)$-class operators. As applications, we characterize normal operators in $\beta(H)$ and establish a normality criterion for hyponormal $\beta(H)$-class operators based on Putnam's inequality. We also obtain corresponding results for the class $\mathcal M_r(H)$.

\section{Structure of hyponormal $\beta(H)$-class operators}

For $S\ge0$ every closed $S$-invariant subspace is reducing (since $S$ is
self-adjoint), so membership in $\beta(H)$ tests exactly the $S$-invariant
subspaces. Let $E$ be the spectral measure of $S$ and
$\Lambda(S)=\{\lambda:E(\{\lambda\})\ne0\}=\sigma_p(S)$.

The following fact will be used throughout.
\begin{proposition}\cite[Proposition 2.4]{CN}
If $T\in \mathcal B(H)$ is positive, then $T$ is norm attaining if and only if
$\norm T\in\sigma_p(T)$.     
\end{proposition}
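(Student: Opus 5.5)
We prove both directions via the spectral theorem for the positive operator $T$, writing $E$ for its spectral measure and $\norm T=\sup\{\lambda:\lambda\in\sigma(T)\}=\max\sigma(T)$.

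For the implication $\norm T\in\sigma_p(T)\Rightarrow T$ norm attaining, choose a unit vector $x$ with $Tx=\norm T x$. Then $\norm{Tx}=\norm T\norm x$, so $T$ attains its norm at $x$.

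For the converse, suppose $\norm{Tx}=\norm T$ for some unit vector $x$; we may assume $T\neq 0$. We aim to show $T^2x=\norm T^2x$ and then pass from $T^2$ back to $T$.

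First, since $T^2=T^*T$ and $\norm{T^2}=\norm T^2$, the Cauchy--Schwarz inequality gives
\[
\norm T^2=\langle T^2x,x\rangle\le\norm{T^2x}\le\norm T^2 .
\]
Hence equality holds in Cauchy--Schwarz, so $T^2x=c\,x$ for some scalar $c$. Pairing with $x$ gives $c=\langle T^2x,x\rangle=\norm T^2$, so $T^2x=\norm T^2x$.

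Second, we remove the square. Because $T+\norm T I$ is positive and invertible when $T\ne0$, the factorization
\[
(T-\norm T I)(T+\norm T I)x=(T^2-\norm T^2 I)x=0
\]
forces $(T-\norm T I)x=0$. (Equivalently, by the spectral theorem, the spectral measure $\langle E(\cdot)x,x\rangle$ is supported on $\{\lambda\ge0:\lambda^2=\norm T^2\}=\{\norm T\}$.) Thus $\norm T\in\sigma_p(T)$.

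The only subtle step is this passage from the eigenvector of $T^2$ to an eigenvector of $T$, and it is exactly where positivity of $T$ is used: it makes $T+\norm T I$ invertible. Everything else is the equality case of Cauchy--Schwarz.
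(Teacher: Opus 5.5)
Your proof is correct. The paper gives no proof of this statement; it simply quotes it from \cite[Proposition 2.4]{CN}. Your route is the standard argument: equality in Cauchy--Schwarz gives $T^2x=\norm T^2x$, and then you cancel the invertible factor $T+\norm T I$.

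Two small points of presentation:
\begin{itemize}
\item In $(T-\norm T I)(T+\norm T I)x=0$ the invertible factor sits on the right, so cancelling it only gives $(T-\norm T I)y=0$ with $y=(T+\norm T I)x$. Commute the factors first (both are polynomials in $T$), writing $(T+\norm T I)(T-\norm T I)x=0$, and then cancel.
\item The case $T=0$ that you set aside is immediate, since the given unit vector satisfies $Tx=0=\norm T x$.
\end{itemize}

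Your argument also proves more than asked. Replace $T^2$ by $T^*T=|T|^2$ and $T+\norm T I$ by $|T|+\norm T I$, and it proves verbatim the paper's Lemma~\ref{lem:attain}: $\norm T\in\sigma_p(|T|)$ for every norm-attaining $T$. That lemma is likewise only cited from \cite{CN}, and the present proposition is its special case $|T|=T$.
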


\begin{theorem}\label{thm:positive}
Let $S\in\cB(H)$, $S\ge0$. Then $S\in\beta(H)$ if and only if
\begin{enumerate}
\item[(a)] $S$ has pure point spectrum, i.e.\ $H=\bigoplus_{\lambda\in\Lambda(S)}E(\{\lambda\})H$; and
\item[(b)] $\Lambda(S)$ is well-ordered by $\ge$, equivalently, it contains no infinite strictly increasing sequence.
\end{enumerate}
\end{theorem}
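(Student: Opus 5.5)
The plan is to use that, for $S\ge0$, reducing subspaces are exactly the closed invariant ones, and to apply the cited Proposition to every restriction $S|_M$. Each $S|_M$ is again positive. So $S\in\beta(H)$ if and only if $\norm{S|_M}\in\sigma_p(S|_M)$ for every non-zero reducing $M$. Both directions will be obtained by choosing suitable spectral subspaces $M=E(\Delta)H$ with $\Delta$ a Borel set.

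\emph{Necessity of (a).} Let $P=\bigoplus_{\lambda\in\Lambda(S)}E(\{\lambda\})$ and suppose $M=(I-P)H\neq0$. This $M$ reduces $S$, and $S|_M$ has no eigenvalues, since any eigenvector of $S|_M$ is an eigenvector of $S$ orthogonal to every eigenspace. Hence $\norm{S|_M}\notin\sigma_p(S|_M)$, so $S|_M$ is not norm attaining, which contradicts $S\in\beta(H)$.

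\emph{Necessity of (b).} Suppose $\lambda_1<\lambda_2<\cdots$ are eigenvalues, and put $\mu=\sup_n\lambda_n$. Take $M=\bigoplus_n E(\{\lambda_n\})H$, which reduces $S$. Then $S|_M$ is diagonal with eigenvalues exactly $\{\lambda_n\}$, and $\norm{S|_M}=\mu$. Since $\mu>\lambda_n$ for all $n$, $\mu$ is not an eigenvalue of $S|_M$, so $S|_M$ is not norm attaining.

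The equivalence stated in (b) is standard set theory. A totally ordered set is well-ordered by $\ge$ exactly when it has no infinite strictly $\ge$-descending chain, that is, no strictly increasing sequence of reals. This step uses dependent choice.

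\emph{Sufficiency.} Assume (a) and (b), and let $M\neq0$ reduce $S$. The main step is to show that $M$ is spanned by its intersections with the eigenspaces. Every spectral projection $E(\{\lambda\})$ is a Borel function of $S$, so it commutes with $P_M$. Therefore $M=\bigoplus_\lambda (M\cap E(\{\lambda\})H)$, using (a) for the decomposition of $H$.

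Set $\Lambda_M=\{\lambda: M\cap E(\{\lambda\})H\neq0\}$. This is a non-empty subset of $\Lambda(S)$, and $S|_M$ is diagonal with eigenvalue set $\Lambda_M$. Hence $\norm{S|_M}=\sup\Lambda_M$. By (b), $\Lambda_M$ has a $\ge$-least element, which is its maximum. So $\sup\Lambda_M$ is attained and is an eigenvalue of $S|_M$, and the cited Proposition gives that $S|_M$ is norm attaining.

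The only delicate points are the commutation of $P_M$ with the spectral projections and the reduction of suprema to maxima. I expect no real obstacle beyond taking care that $\Lambda(S)$ may be uncountable. Even then, separability of $H$ makes $\Lambda(S)$ countable, so sums over it are unproblematic.
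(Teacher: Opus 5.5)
Your proposal is correct and follows essentially the same route as the paper. For necessity you use the same reducing subspaces: $\bigoplus_n E(\{\lambda_n\})H$ for (b), and the orthogonal complement of the pure point part for (a). For sufficiency you use the same decomposition $M=\bigoplus_\lambda\bigl(M\cap E(\{\lambda\})H\bigr)$, with a largest eigenvalue of $S|_M$ supplied by (b).
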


\begin{proof}
($\Rightarrow$) Suppose $\lambda_1<\lambda_2<\cdots$ lie in $\Lambda(S)$ and put
$M=\bigoplus_nE(\{\lambda_n\})H$, a reducing subspace with
$\sigma_p(S|_M)=\{\lambda_n\}$. Then $\norm{S|_M}=\sup_n\lambda_n$, which is not
an eigenvalue of $S|_M$; so $S|_M$ is not norm attaining, a contradiction. This
proves (b).

Suppose now that (b) holds but (a) fails. Let $H_p=\bigoplus_\lambda
E(\{\lambda\})H$ and $H_c=H_p^\perp\ne {\{0}\}$. Both are reducing and $S|_{H_c}$
has no eigenvalues, so by the elementary fact it is not norm attaining,
contradicting $S\in\beta(H)$.

($\Leftarrow$) Let $M\ne {\{0}\}$ be a closed reducing 
subspace. It is invariant under every spectral projection $E(B)$ (for $B$ in the Borel $\sigma$-algebra generated by $\sigma(S)$) and, by (a),
$M=\bigoplus_{\lambda\in I}M_\lambda$ with $M_\lambda=M\cap E(\{\lambda\})H\ne {\{0}\}$
for $\lambda\in I\subseteq\Lambda(S)$, $I\ne\emptyset$. By (b), $I$ has a largest
element $\lambda^*$, and every unit vector $x\in M_{\lambda^*}$ satisfies
$\norm{S|_Mx}=\lambda^*=\norm{S|_M}$.
\end{proof}

\begin{remark}\label{rem:countable}
Every well-ordered subset of $\RR$ is countable (an uncountable one would give
uncountably many pairwise disjoint intervals with rational endpoints). Hence
$\Lambda(S)$ is countable, and is order-isomorphic to a countable ordinal
$\gamma$: $\Lambda(S)=\{\lambda_i\}_{i<\gamma}$, strictly decreasing in $i$.
\end{remark}

\begin{remark}\label{rem:strict}
\begin{enumerate}
\item By the Pandey--Paulsen theorem \cite[Theorem 5.11]{PP}, a positive $P\in\mathcal{AN}(H)$ has
the form $P=\alpha I+K+F$ with $K\ge0$ compact and $F$ self-adjoint of finite
rank; the eigenvalues approach the single essential value $\alpha$ from above
apart from finitely many exceptions, so $P$ satisfies the conditions of
Theorem~\ref{thm:positive}, consistent with $\mathcal{AN}(H)\subseteq\beta(H)$.
Theorem~\ref{thm:positive} permits several, even countably many, accumulation points of eigenvalues (and eigenspaces of infinite dimension) instead of the
single value $\alpha=\min\sigma_{ess}(P)$ forced by \cite[Theorem~1.6]{BR},
provided every accumulation is approached from above. For example,
the diagonal operator with simple eigenvalues $1+\frac1k$ ($k\ge1$) and
$\frac12+\frac1k$ ($k\ge3$) is in $\beta(H)$ by Theorem~\ref{thm:positive},
but not in $\mathcal{AN}(H)$ because $\sigma_{ess}=\{1,\frac12\}$ is not a
singleton. So $\mathcal{AN}(H)\subsetneq\beta(H)$.
\item In \cite{RO21}, necessary and sufficient conditions for a positive operator to belong to $\beta(H)$ were established, together with a representation theorem for such operators. In the course of studying operators that attain their minimum modulus on every nonzero reducing subspace, as investigated in \cite{NR26}, the authors observed that a characterization in terms of sequences alone does not adequately capture the structure of positive operators in $\beta(H)$. Theorem~\ref{thm:positive} provides the appropriate formulation.

It is also worth noting that, for an arbitrary operator $T\in\mathcal B(H)$, the modulus $|T|$ need not belong to $\beta(H)$ whenever $T$ does. Indeed, the reducing subspaces of $|T|$ may be larger in number than those of $T$, so norm attainment on every nonzero reducing subspace of $T$ does not automatically imply the corresponding property for $|T|$. See example \ref{ex:sum}.
\end{enumerate}
\end{remark}

The following result used frequently throughout.
\begin{lemma}\label{lem:attain}\cite{CN}
If $T\in\cB(H)$ is norm attaining and $H\ne\{0\}$, then $\norm T\in\sigma_p(|T|)$.
\end{lemma}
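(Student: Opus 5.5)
The plan is to pass from $T$ to the positive operator $T^*T=|T|^2$ and use Proposition~2.4 (for positive operators, norm attainment is equivalent to the norm being an eigenvalue).

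First, let $x\ne0$ be a unit vector with $\norm{Tx}=\norm T$. Then
\[
\langle T^*Tx,x\rangle=\norm{Tx}^2=\norm T^2=\norm{T^*T}.
\]
So the positive operator $P=T^*T$ attains its norm in the quadratic-form sense at $x$.

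Next I will show that $x$ is an eigenvector of $P$ for the eigenvalue $\norm P$. Put $c=\norm P$. The operator $A=cI-P$ is positive and satisfies $\langle Ax,x\rangle=0$. Writing this as $\norm{A^{1/2}x}^2=0$ gives $A^{1/2}x=0$, hence $Ax=0$. Therefore $Px=\norm T^2x$. Alternatively, Cauchy--Schwarz gives $\norm{Px}\ge\langle Px,x\rangle=c$, so $P$ is norm attaining, and Proposition~2.4 yields $c\in\sigma_p(P)$. The direct route is cleaner, though, because it identifies the eigenvector explicitly.

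Finally, $|T|=P^{1/2}$ is a continuous function of $P$, so every eigenvector of $P$ with eigenvalue $\norm T^2$ is an eigenvector of $|T|$ with eigenvalue $\norm T$. To justify this, restrict to the eigenspace: there $P$ acts as the scalar $\norm T^2$, so by the functional calculus (approximating $\sqrt{\cdot}$ by polynomials) $P^{1/2}$ acts as $\norm T$. Since $x\ne0$, this gives $\norm T\in\sigma_p(|T|)$. The degenerate case $T=0$ is covered automatically, because any nonzero vector is then an eigenvector of $|T|=0$ with eigenvalue $0$; this is where the hypothesis $H\ne\{0\}$ is used.

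The only mildly delicate point is the step $\langle Ax,x\rangle=0\Rightarrow Ax=0$ for positive $A$, and it is handled by the square root argument above.
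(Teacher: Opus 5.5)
Your proof is correct and takes essentially the standard route behind the result the paper cites from \cite{CN}: pass to a positive operator, and use that a positive operator attaining its norm has its norm as an eigenvalue (Proposition~2.4). The paper gives no proof of its own; the most direct version of this route applies Proposition~2.4 to $|T|$ itself, using $\norm{|T|x}=\norm{Tx}$ for all $x$ and $\norm{|T|}=\norm{T}$, which spares you the square-root step.
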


\begin{lemma}\label{lem:top}
Let $T\in\cB(H)$ be hyponormal and norm attaining, $T\ne0$, let
$\lambda=\norm T$, and $M_0=N(|T|-\lambda I)$. Then $M_0\neq {\{0}\}$ and
\begin{enumerate}
\item[(i)] $TM_0\subseteq M_0$ and $T|_{M_0}=\lambda S_0$ with $S_0\in\cB(M_0)$ an isometry.
\item[(ii)] With respect to $H=M_0\oplus M_0^\perp$, we have $$T=\begin{pmatrix}\lambda S_0&A\\0&B\end{pmatrix}$$ with $R(A)\subseteq N(S_0^*)$.
\item[(iii)] $M_0$ reduces $T$ if and only if $A=0$. This holds if $S_0$ is
unitary (e.g.\ if $\dim M_0<\infty$ or $T$ is normal). It also holds if
$T$ is quasinormal, since then $T$ commutes with $|T|$ and hence
$M_0=N(|T|-\lambda I)$ reduces $T$; in that case $S_0$ need not be unitary.
\item[(iv)] Write $S_0=U_0\oplus V_0$ on $M_0=W_0\oplus L_0$ with $U_0$ unitary and
$V_0$ a unilateral shift (Wold decomposition). Then $W_0$ reduces $T$.
\end{enumerate}
\end{lemma}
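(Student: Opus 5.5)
Throughout I write $M_0=N(|T|-\lambda I)$ with $\lambda=\norm T$, so that $T^*T=\lambda^2$ on $M_0$. Nonemptiness is immediate: since $T$ is norm attaining, Lemma~\ref{lem:attain} gives $\lambda\in\sigma_p(|T|)$, so $M_0\neq\{0\}$.

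\textbf{Invariance, (i).} The key input is hyponormality. Take $x\in M_0$. Then
\[
\norm{T^*Tx}=\lambda^2\norm x,\qquad \norm{T^*y}\le\norm{Ty}\le\lambda\norm y\ \text{ for all } y.
\]
Apply this with $y=Tx$, using $\norm{Tx}=\lambda\norm x$:
\[
\lambda^2\norm x=\norm{T^*(Tx)}\le\norm{T(Tx)}\le\lambda\norm{Tx}=\lambda^2\norm x .
\]
All inequalities are therefore equalities, so $\norm{T(Tx)}=\lambda\norm{Tx}$. For any $y$, $\norm{Ty}=\lambda\norm y$ is equivalent to $\langle(\lambda^2-T^*T)y,y\rangle=0$. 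Since $\lambda^2-T^*T\ge0$, this forces $(\lambda^2-T^*T)y=0$, i.e.\ $y\in M_0$. Taking $y=Tx$ gives $Tx\in M_0$, so $TM_0\subseteq M_0$. Setting $S_0=\lambda^{-1}T|_{M_0}$, we get $\norm{S_0x}=\norm x$ on $M_0$, so $S_0$ is an isometry.

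\textbf{Block form, (ii).} Invariance of $M_0$ gives the upper triangular form, with $A=P_{M_0}T|_{M_0^\perp}$. To show $R(A)\subseteq N(S_0^*)$, take $x\in M_0$ and $y\in M_0^\perp$. Since $M_0$ is an eigenspace of the self-adjoint operator $T^*T$, both $M_0$ and $M_0^\perp$ reduce $T^*T$. Hence
\[
\langle Ty,Tx\rangle=\langle y,T^*Tx\rangle=\lambda^2\langle y,x\rangle=0 .
\]
So $Ty\perp TM_0=\lambda S_0M_0$, and therefore $Ay=P_{M_0}Ty\perp R(S_0)$. This means $Ay\in R(S_0)^\perp\cap M_0=N(S_0^*)$ (the orthogonal complement taken inside $M_0$).

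\textbf{Reduction criterion, (iii).} $M_0$ reduces $T$ exactly when $M_0^\perp$ is also invariant, i.e.\ when $A=0$.
\begin{itemize}
\item If $S_0$ is unitary, then $N(S_0^*)=\{0\}$, so $A=0$ by (ii).
\item If $\dim M_0<\infty$, an isometry of $M_0$ is unitary.
\item If $T$ is normal, then $T^*T=TT^*$, so $M_0$ is also invariant under $T^*$ by the symmetric argument, hence reducing.
\item If $T$ is quasinormal, $T$ commutes with $T^*T$ and hence with its spectral projections. So $M_0$ reduces $T$ directly; the unilateral shift (with $M_0=H$) shows $S_0$ need not be unitary.
\end{itemize}

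\textbf{Unitary part, (iv).} This is the step needing the most care. Write $W_0=\bigcap_n S_0^nM_0$; invariance under $T$ is clear. To see $T^*W_0\subseteq W_0$, I would show $P_{M_0}T^*|_{M_0}=\lambda S_0^*$ and that $A^*$ vanishes on $W_0$. Indeed, $A^*$ maps $M_0$ into $M_0^\perp$, and $\langle A^*w,y\rangle=\langle w,Ay\rangle=0$ for $w\in W_0$, because $Ay\in N(S_0^*)\subseteq L_0\perp W_0$. Hence for $w\in W_0$ we get $T^*w=\lambda S_0^*w=\lambda U_0^*w\in W_0$. So $W_0$ is invariant under both $T$ and $T^*$, and therefore reduces $T$.
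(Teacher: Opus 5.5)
Your proof is correct. For (i), (iii) and (iv) it matches the paper's proof. The same chain $\lambda^2\norm{x}=\norm{T^*(Tx)}\le\norm{T(Tx)}\le\lambda\norm{Tx}$ gives invariance. The block form of $T^*$ gives the reduction criterion. For the Wold part, $A^*|_{W_0}=0$ because $R(A)\subseteq N(S_0^*)\subseteq L_0$.

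The genuine difference is in (ii). The paper uses hyponormality a second time. It compresses $T^*T-TT^*\ge 0$ to $M_0$, obtains $AA^*\le\lambda^2(I-S_0S_0^*)$, and then tests this on $R(S_0)$.

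You use only that $M_0$ is an eigenspace of $T^*T$: $\langle Ty,Tx\rangle=\langle y,T^*Tx\rangle=0$ for $x\in M_0$, $y\in M_0^\perp$. In matrix terms, this says the off-diagonal entry $\lambda S_0^*A$ of $T^*T$ vanishes. Your route is more elementary. It shows hyponormality is needed only once, for the invariance in (i). After that, $R(A)\subseteq N(S_0^*)$ holds for any operator whose top $|T|$-eigenspace is invariant.

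The paper's operator inequality looks stronger but is equivalent to your inclusion. Since $\norm{A}\le\norm{T}=\lambda$, the inclusion $R(A)\subseteq N(S_0^*)$ already gives $AA^*\le\lambda^2 P_{N(S_0^*)}=\lambda^2(I-S_0S_0^*)$.

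One small addition in (iii). The statement's parenthetical says $S_0$ is unitary when $T$ is normal. Once you have shown that $M_0$ reduces $T$, this follows because $\lambda S_0=T|_{M_0}$ is then a normal isometry.
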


\begin{proof}
(i) $M_0\ne {\{0}\}$ by Lemma~\ref{lem:attain}. For $x\in M_0$ we have $T^*Tx=\lambda^2x$,
so $\norm{Tx}=\lambda\norm x$. Put $y=Tx$. Then $T^*y=\lambda^2x$, hence
$\norm{T^*y}=\lambda^2\norm x=\lambda\norm y$. As $T$ is hyponormal. we have
$\norm{Ty}\ge\norm{T^*y}=\lambda\norm y\ge\norm{Ty}$, so $\norm{Ty}=\lambda\norm y$,
i.e.\ $\langle(\lambda^2-T^*T)y,y\rangle=0$, whence $y\in M_0$ as before. Thus
$TM_0\subseteq M_0$, and $\lambda^{-1}T|_{M_0}$ is an isometry (cf.\ \cite{Lee}).

(ii) Invariance of $M_0$ gives the zero in the lower left corner. Compute the
$(1,1)$ entries: $T^*T$ contributes $\lambda^2I_{M_0}$ and $TT^*$ contributes
$\lambda^2S_0S_0^*+AA^*$ for some $A\in \mathcal B(M_0^{\perp},M_0)$. Hence hyponormality yields
\[
0\le\lambda^2(I-S_0S_0^*)-AA^*.
\]
For $x\in R(S_0)$ the right side gives $\langle AA^*x,x\rangle\le0$, so $A^*x=0$.
Thus $R(A)\perp R(S_0)$, i.e.\ $R(A)\subseteq N(S_0^*)$.

(iii) $T^*=\begin{pmatrix}\lambda S_0^*&0\\A^*&B^*\end{pmatrix}$, so $M_0$ is
$T^*$-invariant iff $A^*=0$. If $S_0$ is unitary, $N( S_0^*)=0$ and $A=0$.

(iv) For $x\in W_0$, $Tx=\lambda U_0x\in W_0$ and
$T^*x=\lambda S_0^*x+A^*x$, where $S_0^*x=U_0^*x\in W_0$ and $A^*x=0$ because
$R(A)\subseteq N(S_0^*)\subseteq L_0\perp W_0$.
\end{proof}
\begin{theorem}\label{thm:main}
For $T\in\cB(H)$, $T\ne0$, the following are equivalent.
\begin{enumerate}
\item[(a)] $T\in\beta(H)$ and $N(|T|-\lambda I)$ reduces $T$ for every $\lambda>0$.
\item[(b)] $T$ is quasinormal and $|T|\in\beta(H)$.
\item[(c)] There are a countable ordinal $\gamma$, a strictly decreasing family
$(\lambda_i)_{i<\gamma}$ of positive reals, and a decomposition
\[
H=\Big(\bigoplus_{i<\gamma}M_i\Big)\oplus N(T)
\]
into pairwise orthogonal reducing subspaces of $T$ such that
\[
T=\Big(\bigoplus_{i<\gamma}\lambda_iS_i\Big)\oplus0_{N(T)},\qquad
M_i=N(|T|-\lambda_iI),
\]
where each $S_i\in\cB(M_i)$ is an isometry.
\end{enumerate}
In particular, $T$ has no off-diagonal (finite-rank correction) term, and
$|T|=\big(\bigoplus_i\lambda_iI_{M_i}\big)\oplus0$. Here $N(T)=\{0\}$ is allowed.
\end{theorem}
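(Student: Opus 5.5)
I would prove the cycle (a)$\Rightarrow$(c)$\Rightarrow$(b)$\Rightarrow$(a), using Theorem~\ref{thm:positive} for $|T|$ and the basic fact that a quasinormal operator restricted to an eigenspace of $|T|$ is a multiple of an isometry.

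\textbf{(a)$\Rightarrow$(c).} Since each $N(|T|-\lambda I)$ reduces $T$, each such subspace also reduces $T^*T$ and hence $|T|$. First I show that $|T|$ has pure point spectrum. Let $H_p=\overline{\operatorname{span}}\bigl(N(T)\cup\bigcup_{\lambda>0}N(|T|-\lambda I)\bigr)$. This is a closed orthogonal sum of reducing subspaces of $T$; note $N(T)=N(|T|)$ is reducing because $T^*$ maps $N(T)$ into $N(T)$ only if $T$ is suitable, so I will instead argue with $H_p^\perp$ directly. The space $H_p^\perp$ is reducing for $T$, being the complement of a sum of reducing subspaces. If $K:=H_p^\perp\ne\{0\}$, then $T|_K\in$ norm attaining, since $T\in\beta(H)$. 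By Lemma~\ref{lem:attain}, $\norm{T|_K}$ is an eigenvalue of $|T|_K|$. Because $K$ reduces $T$, we have $|T|_K|=|T|\,|_K$. If $\norm{T|_K}>0$, this gives a nonzero eigenvector of $|T|$ inside $K\perp H_p$, a contradiction. If $\norm{T|_K}=0$, then $K\subseteq N(T)\subseteq H_p$, again a contradiction. Hence $H=H_p$. (Any issue about whether $N(T)$ reduces disappears once $H_p=H$ and the positive eigenspaces reduce, because $N(T)$ is then their orthogonal complement.)

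Next I establish the well-ordering of the eigenvalues. Suppose $\lambda_1<\lambda_2<\cdots$ are positive eigenvalues of $|T|$. Then $M=\bigoplus_n N(|T|-\lambda_n I)$ reduces $T$, and $\norm{T|_M}=\sup_n\lambda_n$ is not an eigenvalue of $|T|\,|_M$, contradicting Lemma~\ref{lem:attain}. By Remark~\ref{rem:countable}, we may therefore index the positive eigenvalues as $(\lambda_i)_{i<\gamma}$, strictly decreasing. On each $M_i$ we have $\norm{Tx}=\lambda_i\norm x$, and $TM_i\subseteq M_i$ since $M_i$ reduces $T$. So $S_i=\lambda_i^{-1}T|_{M_i}$ is an isometry, and (c) follows.

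\textbf{(c)$\Rightarrow$(b).} This step is computation. From (c), $T^*T=\bigoplus_i\lambda_i^2 I_{M_i}\oplus 0$, which commutes with $T=\bigoplus_i\lambda_iS_i\oplus 0$, so $T$ is quasinormal. Moreover $|T|=\bigoplus_i\lambda_i I_{M_i}\oplus 0$ has pure point spectrum with eigenvalue set $\{\lambda_i\}\cup\{0\}$ (including $0$ only if $N(T)\neq\{0\}$). This set is well-ordered by $\ge$, since adjoining a minimum element preserves well-ordering. Theorem~\ref{thm:positive} then gives $|T|\in\beta(H)$.

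\textbf{(b)$\Rightarrow$(a).} Since $T$ is quasinormal, $T$ commutes with $T^*T$, hence with $|T|$ and all of its spectral projections. Thus every $N(|T|-\lambda I)$ reduces $T$. Now let $M\ne\{0\}$ reduce $T$. Then $M$ reduces $T^*T$ and so $|T|$, and $|T|\,|_M=|T|_M|$. Because $|T|\in\beta(H)$, the operator $|T|\,|_M$ attains its norm $\mu=\norm{T|_M}$ at some $x\in M$. For that $x$, $\norm{Tx}=\norm{|T|x}=\mu\norm x$, so $T|_M$ is norm attaining and $T\in\beta(H)$.

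The main obstacle is the pure-point-spectrum step in (a)$\Rightarrow$(c). The argument has to work with $T$'s reducing subspaces, not $|T|$'s, and the key point is that the complement of the span of the eigenspaces reduces $T$ while containing no eigenvector of $|T|$. Handling $N(T)$ carefully within that argument is where the proof is most delicate.
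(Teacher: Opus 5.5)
Your implications (c)$\Rightarrow$(b) and (b)$\Rightarrow$(a) match the paper's. In (a)$\Rightarrow$(c) there is one unjustified step. You assert that $H_p^\perp$ reduces $T$ ``being the complement of a sum of reducing subspaces.'' But $H_p$ contains $N(T)$, which, as you yourself note, is not yet known to reduce $T$.

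Write $E=\overline{\operatorname{span}}\bigcup_{\lambda>0}N(|T|-\lambda I)$, so that $H_p^\perp=E^\perp\cap N(T)^\perp$. This space is $T^*$-invariant, because $E^\perp$ is and $T^*H\subseteq\overline{R(T^*)}=N(T)^\perp$. Its $T$-invariance, however, is not available a priori, so you cannot yet invoke $T\in\beta(H)$ on it. Your closing parenthetical does not help, since it presupposes $H_p=H$.

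The repair is immediate: run your argument on $K=E^\perp$, which does reduce $T$ as the complement of a closed sum of reducing subspaces.
\begin{itemize}
\item The case $\norm{T|_K}>0$ is impossible, since it yields an eigenvector of $|T|$ with positive eigenvalue orthogonal to $E$.
\item Hence $K\subseteq N(T)$. Since $N(T)=N(|T|)\perp E$ anyway, you get $E^\perp=N(T)$.
\end{itemize}
This gives $H=E\oplus N(T)$ and shows that $N(T)$ reduces $T$, in one stroke.

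With this fix your route is correct, but it differs from the paper's. The paper builds the decomposition by transfinite recursion, setting $H_i=(\bigoplus_{j<i}M_j)^\perp$, $\lambda_i=\norm{T|_{H_i}}$ and $M_i=N(|T|-\lambda_iI)$, and peeling off the top eigenspace at each stage. The ordinal indexing, the strict decrease of the $\lambda_i$, and the identification $H_\gamma=N(T)$ at termination all come out of the induction, with $\gamma$ countable by Remark~\ref{rem:countable}.

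You instead split the claim into two independent facts: $|T|$ has pure point spectrum, and its positive eigenvalues contain no strictly increasing sequence. You prove both exactly as in Theorem~\ref{thm:positive}, using only reducing subspaces of $T$ assembled from $|T|$-eigenspaces (or their complement). The well-ordering and the ordinal enumeration then come for free from Remark~\ref{rem:countable}.

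Your version avoids transfinite induction and makes transparent that hypothesis (a) simply transplants Theorem~\ref{thm:positive} to the lattice generated by the $|T|$-eigenspaces. The paper's version is constructive, realizing each $\lambda_i$ as the norm of $T$ on the remaining reducing subspace.
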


\begin{proof}
(a)$\Rightarrow$(c). Define recursively, for ordinals $i$, the subspaces
\[
H_i=\Big(\bigoplus_{j<i}M_j\Big)^\perp,
\]
and, as long as $T|_{H_i}\ne0$,
\[
\lambda_i=\norm{T|_{H_i}},\qquad M_i=N(|T|-\lambda_iI).
\]
We check by transfinite induction that this is well defined and that
\begin{enumerate}
\item[(1)] $H_i$ reduces $T$;
\item[(2)] $\lambda_i>0$ is an eigenvalue of $|T|$ with eigenvector in $H_i$, and $\lambda_i<\lambda_j\; (j<i)$;
\item[(3)] $M_i\subseteq H_i$, and $M_i$ reduces $T$.
\end{enumerate}
(1) holds because the $M_j$, $j<i$, reduce $T$. By (1) and $T\in\beta(H)$,
$T|_{H_i}$ is norm attaining, so Lemma~\ref{lem:attain} applied to $T|_{H_i}$,
whose modulus is $|T||_{H_i}$, shows that $\lambda_i$ is an eigenvalue of $|T|$
with an eigenvector in $H_i$. For $j<i$ we have $H_i\subseteq H_{j+1}\subseteq H_j$
and $H_{j+1}=H_j\ominus M_j$ (by (3) for $j$); so $\lambda_i\le\lambda_{j+1}\le\lambda_j$.
Moreover $\lambda_{j+1}\ne\lambda_j$, since $\lambda_{j+1}$ has an eigenvector in
$H_{j+1}\perp M_j=N(|T|-\lambda_j)$. Hence $\lambda_i<\lambda_j$, which proves (2).
Since $\lambda_i\ne\lambda_j$, $N(|T|-\lambda_i)\perp N(|T|-\lambda_j)=M_j$ for all
$j<i$, so $M_i\subseteq H_i$; and $M_i$ reduces $T$ by hypothesis (a). This proves (3).

On $M_i$ we have $T^*T=\lambda_i^2I$, and $TM_i\subseteq M_i$, so
$S_i:=\lambda_i^{-1}T|_{M_i}$ is an isometry in $\cB(M_i)$.
Since the $\lambda_i$'s are distinct elements of $\mathbb R$, Remark \ref{rem:countable} shows that the index set $\gamma$ is countable. Hence the recursion terminates at a countable ordinal with $T|_{H_\gamma}=0$. Then
$H_\gamma \subseteq N(T)$. Conversely, $N(T)=N(|T|)\perp M_j$ for all $j<\gamma$ gives $N(T)\subseteq H_\gamma$. Thus $H_\gamma=N(T)$ and
$H=(\bigoplus_{i<\gamma}M_i)\oplus N(T)$.

(c)$\Rightarrow$(b). $T^*T=(\bigoplus\lambda_i^2I_{M_i})\oplus0$ commutes with
$T$, which respects the decomposition; so $T$ is quasinormal. Also
$|T|=(\bigoplus\lambda_iI_{M_i})\oplus0$ has pure point spectrum with
$\Lambda(|T|)=\{\lambda_i\}_{i<\gamma}\cup(\{0\}\text{ if }N(T)\ne0)$, which is
well-ordered by $\ge$. By Theorem~\ref{thm:positive}, $|T|\in\beta(H)$.

(b)$\Rightarrow$(a). 

Suppose that $M\neq\{0\}$ reduces $T$. Then $M$ also reduces $|T|$.
Since $|T|\in\beta(H)$, the restriction $|T||_M$ attains its norm.
Thus, there exists $x\in M$ with $\|x\|=1$ such that
$\bigl\||T|x\bigr\|=\bigl\||T||_M\bigr\|$. Since $|T||_M$ is positive, norm attainment implies that
$|T|x=\bigl\||T||_M\bigr\|x$. Moreover,
$\|Tx\|=\bigl\||T|x\bigr\|$.
Consequently,
\[
\|Tx\|=\bigl\||T||_M\bigr\|=\|T|_M\|.
\]
Therefore, $T|_M$ attains its norm. Hence $T\in\beta(H)$.

$T$ commutes with $|T|$, and so does $T^*$. Hence every spectral subspace of $|T|$, in particular
$N(|T|-\lambda I)$, reduces $T$. 
\end{proof}

\begin{remark}
In (a), hyponormality is a consequence rather than an assumption. For quasinormal
$T$ one has $T^*T=|T|^2$ commuting with $T$, so (b) is the cleanest formulation.
\end{remark}

The hypothesis that the $|T|$-eigenspaces reduce $T$ cannot be dropped: the
following examples show that hyponormal operators in $\beta(H)$ need not
decompose as in Theorem~\ref{thm:main}.

\begin{example}\label{ex:shift}
Let $W$ be the weighted shift on $\ell^2$ with orthonormal basis
$e_{-1},e_0,e_1,\dots$ given by $$We_{-1}=\frac12e_0\; \text{and}\; We_n=e_{n+1}\; \text{for}\;n\ge0.$$
Then $W^*W=\operatorname{diag}(\frac14,1,1,\dots)$ and
$WW^*=\operatorname{diag}(0,\frac14,1,1,\dots)$, so
$$W^*W-WW^*=\operatorname{diag}(\frac14,\frac34,0,0,\dots)\ge0.$$ That is, $W$ is hyponormal. Let $P$ be the projection onto a reducing subspace $N$. It
commutes with $W^*W-WW^*$, whose eigenvalue $\frac14$ has eigenspace
$\mathbb Ce_{-1}$; so $Pe_{-1}=ce_{-1}$ with $c\in\{0,1\}$. If $c=1$, then
$W^ne_{-1}\in N$ for all $n$, so $N=\ell^2$. If $c=0$, the same argument for
$N^\perp$ gives $N=0$. Thus $W$ has no non-trivial reducing subspace, and since
$\norm{We_0}=\norm W=1$, $W\in\beta(H)$. But
$N(|W|-I)=\overline{\operatorname{span}}\{e_n:n\ge0\}$ does not reduce $W$
(because $W^*e_0=\frac12e_{-1}$), and $W$ is not quasinormal. Here $\sigma(W)=\overline{\D}$ ($W$ is a finite rank perturbation of the unilateral shift), so $\Area\sigma(W)>0$.
\end{example}

\begin{example}\label{ex:sum}
For $k\ge1$ let $T_k$ be as in Example~\ref{ex:shift} with weight
$\alpha_k=(1-3^{-k})^{1/2}$ in place of $\frac12$, and $T=\bigoplus_kT_k$. The
numbers $\alpha_k^2$ and $1-\alpha_k^2=3^{-k}$ ($k\ge1$) are non-zero and pairwise
distinct, so the eigenvalue $\alpha_k^2$ of $T^*T-TT^*$ has the one dimensional
eigenspace spanned by $e^{(k)}_{-1}$. As in Example~\ref{ex:shift}, every
reducing subspace of $T$ has the form $\bigoplus_{k\in K}H_k$, and
$T|_N$ has norm $1$, attained at $e_0^{(k)}$. So $T$ is hyponormal and
$T\in\beta(H)$. However, $\Lambda(|T|)\supseteq\{\alpha_k\}$ is strictly
increasing, so $|T|\notin\beta(H)$ by Theorem~\ref{thm:positive}(b); explicitly,
$M=\overline{\operatorname{span}}\{e^{(k)}_{-1}\}$ reduces $|T|$ and
$\sup_k\alpha_k=1$ is not attained on $M$. Thus $T\in\beta(H)$ hyponormal does
not imply $|T|\in\beta(H)$, and $T$ is not as in Theorem~\ref{thm:main}.
\end{example}

\begin{remark}
By Lemma~\ref{lem:top} and the standard decomposition of a hyponormal
operator into a normal part and a completely non-normal part,
$T=T_n\oplus T_p$ with $T_n,T_p\in\beta(H)$ (the class $\beta$ passes to
reducing subspaces). The normal part is described by Corollary~\ref{cor:normal} below. A description of the completely non-normal part $T_p$ appears to require further investigation; Example~\ref{ex:shift} illustrates the difficulty.
\end{remark}

Next we characterize normal  $\beta(H)$-class.
\begin{corollary}\label{cor:normal}
Let $T\in\cB(H)$, $T\ne0$, be normal. Then $T\in\beta(H)$ if and only if
\[
T\cong\Big(\bigoplus_{i<\gamma}\lambda_iU_i\Big)\oplus0,
\]
with $\gamma$ a countable ordinal, $(\lambda_i)_{i<\gamma}$ strictly decreasing
positive reals, and $U_i$ unitary on $M_i=N(|T|-\lambda_iI)$, $H=(\bigoplus M_i)\oplus N(T)$.
\end{corollary}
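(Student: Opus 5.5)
The plan is to reduce the corollary to Theorem~\ref{thm:main}, using that normal operators are quasinormal and that their $|T|$-eigenspaces reduce $T$.

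\emph{Forward direction.} Suppose $T$ is normal and $T\in\beta(H)$. Normality gives $T|T|=|T|T$, since $|T|$ is a continuous function of $T^*T$ and $T$ commutes with $T^*T$. Consequently each eigenspace $N(|T|-\lambda I)$ is invariant under both $T$ and $T^*$, so it reduces $T$. Condition (a) of Theorem~\ref{thm:main} therefore holds, and (c) follows: we obtain a countable ordinal $\gamma$, strictly decreasing positive reals $\lambda_i$, and isometries $S_i$ on $M_i=N(|T|-\lambda_iI)$ with $H=(\bigoplus M_i)\oplus N(T)$. It remains to show that each $S_i$ is unitary. Since $M_i$ reduces $T$, the restriction $T|_{M_i}=\lambda_iS_i$ is normal, and hence $S_i$ is a normal isometry. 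Then $S_iS_i^*=S_i^*S_i=I$, so $S_i$ is unitary. Alternatively, one can cite Lemma~\ref{lem:top}(iii), which notes that $S_0$ is unitary when $T$ is normal.

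\emph{Reverse direction.} Suppose $T=(\bigoplus\lambda_iU_i)\oplus0$ as in the statement. This is precisely form (c) of Theorem~\ref{thm:main} with isometries $S_i=U_i$. The implication (c)$\Rightarrow$(a) of that theorem gives $T\in\beta(H)$. Normality is immediate, since a direct sum of scalar multiples of unitaries and a zero operator is normal. Because the corollary assumes $T$ normal anyway, this last remark is only a consistency check.

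\emph{Main subtlety.} The only point needing care is the meaning of $\cong$ in the statement. We take it as equality with respect to the orthogonal decomposition $H=(\bigoplus M_i)\oplus N(T)$ into the $|T|$-eigenspaces. With that reading, the identification $M_i=N(|T|-\lambda_iI)$ holds automatically in both directions, because $|T|=(\bigoplus\lambda_iI_{M_i})\oplus0$. Nothing beyond Theorem~\ref{thm:main} and the unitarity of normal isometries is needed.
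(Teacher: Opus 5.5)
Your proposal is correct and follows the paper's proof essentially verbatim. Normality makes the $|T|$-eigenspaces reducing, so Theorem~\ref{thm:main} (a)$\Rightarrow$(c) applies; each $S_i$ is then a normal isometry and hence unitary; and the converse is (c)$\Rightarrow$(a). One small caveat on your alternative: Lemma~\ref{lem:top}(iii) directly covers only the top block $M_0=N(|T|-\norm{T}I)$, so for general $i$ you would have to apply it to $T$ restricted to the reducing subspace $(\bigoplus_{j<i}M_j)^\perp$. Your normal-isometry argument already suffices without it.
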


\begin{proof}
Let $T\in\beta(H)$ be normal. Then $T$ and $T^*$ commute with $|T|$, so all
eigenspaces of $|T|$ reduce $T$; hence (a) of Theorem~\ref{thm:main} holds and
(c) gives $T=(\bigoplus\lambda_iS_i)\oplus0$. Each $T|_{M_i}=\lambda_iS_i$ is
normal (restriction to a reducing subspace) and $S_i$ is an isometry, so
$S_i^*S_i=S_iS_i^*=I$, i.e.\ $S_i$ is unitary. The converse follows from
Theorem~\ref{thm:main} ((c)$\Rightarrow$(a)), since such an operator is normal.
\end{proof}
 \begin{remark}
Theorem \ref{thm:main} provides the correct formulation of \cite[Theorem 3.9]{RO22}, while Corollary \ref{cor:normal} recovers \cite[Theorems 3.1 and 3.11]{RO22} in corrected forms. In particular, \cite[Theorem 3.11]{RO22} is formulated in terms of sequences, but this formulation is insufficient to establish the conclusion of Theorem \ref{thm:positive}. Corollary \ref{cor:normal} provides the appropriate formulation, thereby rectifying this issue.
\end{remark}

\begin{lemma}\label{lem:closure}
Let $\Lambda\subseteq\mathbb{R}$ be well ordered by $\geq$. Then
$\overline{\Lambda}$ is also well ordered by $\geq$. In particular,
$\overline{\Lambda}$ is countable, and every point of
$\overline{\Lambda}\setminus\Lambda$ is a limit of points of $\Lambda$
from above only. If $\Lambda$ is well ordered by $\leq$, the same holds
with $\overline{\Lambda}$ well ordered by $\leq$ and with limits from
below only.
\end{lemma}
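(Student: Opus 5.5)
The plan is to use the characterization in the statement: a subset of $\mathbb R$ is well ordered by $\ge$ exactly when it contains no infinite strictly increasing sequence. So I will assume $\overline{\Lambda}$ contains a strictly increasing sequence $\mu_1<\mu_2<\cdots$ and derive a strictly increasing sequence inside $\Lambda$ itself, which is a contradiction.

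Let $\mu=\lim\mu_n=\sup_n\mu_n$; the limit exists whether the sequence is bounded or not, and in the unbounded case it is simply never reached. For each $n$ the open interval $(\mu_n,\mu_{n+1})$ contains no point of the sequence. I want a point $\nu_n\in\Lambda$ with $\mu_{n-1}<\nu_n<\mu_{n+1}$, interpreting $\mu_0=-\infty$. Since $\mu_n\in\overline{\Lambda}$, there are points of $\Lambda$ arbitrarily close to $\mu_n$, so such a $\nu_n$ exists in the open interval $(\mu_{n-1},\mu_{n+1})$, which is a neighbourhood of $\mu_n$. Then
\[
\nu_n<\mu_{n+1}<\nu_{n+2}.
\]
Hence the subsequence $\nu_1,\nu_3,\nu_5,\dots$ is strictly increasing in $\Lambda$, contradicting the hypothesis. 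This proves that $\overline{\Lambda}$ is well ordered by $\ge$. Countability then follows directly from Remark~\ref{rem:countable} applied to $\overline{\Lambda}$.

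For the "from above only" assertion, take $x\in\overline{\Lambda}\setminus\Lambda$. It is a limit of some sequence in $\Lambda$, and none of its terms equals $x$. If infinitely many terms were less than $x$, I could extract a strictly increasing subsequence converging to $x$, since every term lies strictly below $x$ and the terms approach $x$. That is impossible in $\Lambda$. So all but finitely many terms lie above $x$, and no sequence in $\Lambda$ can approach $x$ from below. Equivalently, some interval $(x-\varepsilon,x)$ misses $\Lambda$: otherwise I could pick points of $\Lambda$ in successively shrinking left intervals and build a strictly increasing sequence. The $\le$ version follows by applying the whole argument to $-\Lambda$.

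The only delicate step is the interleaving construction in the first paragraph. Approximating each $\mu_n$ by a point of $\Lambda$ does not automatically preserve strict monotonicity, because the approximants could overshoot one another. Confining each $\nu_n$ to $(\mu_{n-1},\mu_{n+1})$ and passing to every other index handles this cleanly.
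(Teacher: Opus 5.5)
Your proof is correct and follows essentially the same route as the paper. You both approximate a hypothetical strictly increasing sequence in $\overline{\Lambda}$ by points of $\Lambda$ with controlled overshoot to get a strictly increasing sequence in $\Lambda$; the paper uses radii $\varepsilon_n$ below a third of the adjacent gaps, while you use the neighbourhoods $(\mu_{n-1},\mu_{n+1})$ and keep every other term. The ``from above only'' claim is then handled identically, by noting that an approach from below would give a strictly increasing sequence in $\Lambda$.
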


\begin{proof}
A subset of $\mathbb{R}$ is well ordered by $\geq$ if and only if it
contains no infinite strictly increasing sequence. Suppose, for a
contradiction, that $x_1<x_2<\cdots$ is a sequence in
$\overline{\Lambda}$. Choose $\varepsilon_n>0$ such that
\[
\varepsilon_n<\tfrac13\,(x_{n+1}-x_n)
\quad\text{and}\quad
\varepsilon_n<\tfrac13\,(x_n-x_{n-1})\ \ (n\ge 2),
\]
and pick $\lambda_n\in\Lambda$ with $|\lambda_n-x_n|<\varepsilon_n$.
Then
\[
\lambda_n< x_n+\varepsilon_n< x_{n+1}-\varepsilon_{n+1}<\lambda_{n+1},
\]
so $(\lambda_n)$ is a strictly increasing sequence in $\Lambda$,
contradicting the well ordering of $\Lambda$. Hence $\overline{\Lambda}$
is well ordered by $\geq$, and by Remark~\ref{rem:countable} it is
countable.

If $p\in\overline{\Lambda}\setminus\Lambda$ were the limit of a
sequence in $\Lambda$ increasing to $p$, that sequence would be an
infinite strictly increasing sequence in $\Lambda$. So $p$ is
approached from above only.

The case where $\Lambda$ is well ordered by $\leq$ follows by applying
the above to $-\Lambda$.
\end{proof}

\begin{theorem}\label{thm:criterion}
Let $T\in\beta(H)$ be hyponormal. Then the following are equivalent.
\begin{enumerate}
\item[(1)] $T$ is normal.
\item[(2)] $\Area(\sigma(T))=0$, where $\Area$ is planar Lebesgue measure.
\end{enumerate}
If moreover $T$ is as in Theorem~\ref{thm:main}, $T=(\bigoplus\lambda_iS_i)\oplus0$,
then (1) and (2) are also equivalent to
\begin{enumerate}
\item[(3)] $S_i$ is unitary for every $i<\gamma$.
\end{enumerate}
\end{theorem}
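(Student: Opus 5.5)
The plan is to prove (2)$\Rightarrow$(1) by Putnam's inequality and (1)$\Rightarrow$(2) by reducing to the normal representation of Corollary~\ref{cor:normal}. We would then treat (3) separately, by working block by block in the decomposition of Theorem~\ref{thm:main}.

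\emph{(2)$\Rightarrow$(1).} This direction needs nothing specific to $\beta(H)$. Putnam's inequality states that for hyponormal $T$,
\[
\norm{T^*T-TT^*}\le\frac{\Area(\sigma(T))}{\pi}.
\]
So if $\Area(\sigma(T))=0$, then $T^*T=TT^*$ and $T$ is normal. This is the classical fact recalled in the introduction.

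\emph{(1)$\Rightarrow$(2).} This is where $T\in\beta(H)$ is used.
\begin{enumerate}
\item[(i)] If $T=0$, then $\sigma(T)=\{0\}$ and there is nothing to prove. Otherwise Corollary~\ref{cor:normal} gives $T=(\bigoplus_{i<\gamma}\lambda_iU_i)\oplus0$ with $(\lambda_i)$ strictly decreasing. Hence $|T|=(\bigoplus_i\lambda_iI_{M_i})\oplus0$.
\item[(ii)] The spectrum of this diagonal positive operator is the closure of $\Lambda(|T|)$, namely $\overline{\{\lambda_i\}_{i<\gamma}}$, together with $0$ if $N(T)\ne\{0\}$. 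Since $\{\lambda_i\}$ is well ordered by $\ge$, Lemma~\ref{lem:closure} shows that this closure is countable. Thus $\sigma(|T|)$ is a countable set $C\subseteq[0,\infty)$.
\item[(iii)] For normal $T$, the spectral theorem (continuous functional calculus applied to $z\mapsto|z|$) gives $\sigma(|T|)=\{|z|:z\in\sigma(T)\}$. Therefore
\[
\sigma(T)\subseteq\bigcup_{r\in C}\{z:|z|=r\}.
\]
This is a countable union of circles, possibly including the point $\{0\}$, each of planar measure zero. Hence $\Area(\sigma(T))=0$.
\end{enumerate}
The only delicate point is step (ii): the closure of the eigenvalue set must remain countable. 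Lemma~\ref{lem:closure} is exactly what supplies this. Without it, a set of accumulation values of positive length could produce an annulus of positive area.

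\emph{(1)$\Leftrightarrow$(3)} when $T=(\bigoplus\lambda_iS_i)\oplus0$. The subspaces $M_i$ and $N(T)$ reduce $T$, so $T$ is normal if and only if each block is normal. The zero block is trivially normal, and $\lambda_iS_i$ with $\lambda_i>0$ is normal if and only if $S_i$ is normal. An isometry $S_i$ is normal exactly when $S_iS_i^*=S_i^*S_i=I$, that is, when $S_i$ is unitary. This gives the equivalence with (1), and hence with (2).
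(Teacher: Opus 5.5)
Your proposal is correct and follows the paper's proof. You use Putnam's inequality for (2)$\Rightarrow$(1), Corollary~\ref{cor:normal} plus Lemma~\ref{lem:closure} to confine $\sigma(T)$ to countably many circles for (1)$\Rightarrow$(2), and the blockwise ``an isometry is normal iff unitary'' argument for (3). The only variation is that you obtain $\sigma(T)\subseteq\{z:|z|\in\sigma(|T|)\}$ from the spectral mapping theorem applied to $|T|=f(T)$ with $f(z)=|z|$, whereas the paper proves the same inclusion directly from the uniform resolvent bound $\norm{(\lambda_iU_i-z)^{-1}}\le\big||z|-\lambda_i\big|^{-1}$ on each block.
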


\begin{proof}
(2)$\Rightarrow$(1): Putnam's inequality \cite{Put} gives
$\norm{T^*T-TT^*}\le\frac1\pi\Area(\sigma(T))=0$.

(1)$\Rightarrow$(2): By Corollary~\ref{cor:normal}, $T\cong(\bigoplus\lambda_iU_i)\oplus0$.
Let $K=\{\lambda_i: i<\gamma\}\cup\{0\}$. By Lemma \ref{lem:closure}, $\overline{K}$ is countable. If $|z|\notin \overline{K}$, put $d=\operatorname{dist}(|z|,\overline{K})>0$.
Since $\sigma(\lambda_iU_i)\subseteq\lambda_i\T$ and $\lambda_iU_i$ is normal,
$\norm{(\lambda_iU_i-z)^{-1}}\le\big||z|-\lambda_i\big|^{-1}\le d^{-1}$, and on
$N(T)$ the resolvent norm is $|z|^{-1}\le d^{-1}$. So the direct sum of the
resolvents is bounded. Hence $z\notin\sigma(T)$, and
$\sigma(T)\subseteq\{z:|z|\in \overline{K}\}$. Since $\sigma(T)$ is contained in countable union of circles, it has
planar measure zero.

For the last assertion, (3)$\Rightarrow$(1) is clear, and if $T$ is normal then
each $\lambda_iS_i$ is normal, so the isometry $S_i$ is unitary.
\end{proof}

\begin{remark}
(i) If $T$ is compact, then $T\in\mathcal{AN}(H)\subseteq\beta(H)$ and
$\sigma(T)$ is countable, so a hyponormal compact operator is normal by
Theorem~\ref{thm:criterion}; this recovers \cite[Corollary~3.10]{BR} without
using any decomposition.

(ii) In contrast to \cite[Corollary~3.11]{BR}, no hypothesis such as
$\sigma_{ess}(T)=\omega(T)$ is required here: for hyponormal $\beta(H)$-operators,
normality is equivalent to $\Area\sigma(T)=0$. Note that pure point spectrum of
$|T|$ alone does \emph{not} force $\Area\sigma(T)=0$: the operator $W$ of
Example~\ref{ex:shift} has $\sigma(W)=\overline\D$.
\end{remark}

\subsection{The class $\mathcal M_r(H)$}

Let $T\in \mathcal B(H)$. Then the minimum modulus of $T$ is defined by  $$m(T)=\inf_{\norm x=1}\norm{Tx}.$$ 
The operator $T$ is said to be minimum attaining if there exists $x\in H$ with $\|x\|=1$ such that $\|Tx\|=m(T)$. Let $\mathcal M(H)$ denote the class of minimum attaining operators on $H$. We refer to \cite{CN14} for more details on this class.

The following new class of $\mathcal M(H)$ is studied in \cite{NR26}:
\[
\mathcal M_r(H)=\{T\in\cB(H):T|_M\in\mathcal M(M)\ \text{for every non-zero closed reducing subspace }M\text{ of }T\}.
\]

\begin{corollary}\label{cor:Mr-positive}
For $T\ge0$: $T\in\mathcal M_r(H)$ if and only if $T$ has pure point spectrum and
$\Lambda(T)$ is well-ordered by $\le$, i.e.\ contains no infinite strictly
decreasing sequence. In that case $T=\bigoplus_{i<\gamma}\mu_iI_{M_i}$ with
$(\mu_i)$ strictly increasing and $M_i=N(T-\mu_iI)$.
\end{corollary}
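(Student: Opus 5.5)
The plan mirrors the proof of Theorem~\ref{thm:positive}, replacing ``norm'' by ``minimum modulus'' and ``largest eigenvalue'' by ``smallest eigenvalue''. The one elementary fact needed is the analogue of \cite[Proposition 2.4]{CN}. For $S\ge0$ on a non-zero Hilbert space $K$, $m(S)=\min\sigma(S)$, and $S$ is minimum attaining if and only if $m(S)\in\sigma_p(S)$. Indeed, if $\norm{Sx}=m(S)$ with $\norm x=1$, then $\langle (S^2-m(S)^2)x,x\rangle=0$ and $S^2-m(S)^2\ge0$. Hence $S^2x=m(S)^2x$, and taking positive square roots on that eigenspace gives $Sx=m(S)x$. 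The converse is immediate. We also use that every closed invariant subspace of $T\ge0$ is reducing.

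($\Rightarrow$) Let $T\in\mathcal M_r(H)$. Suppose first that $\mu_1>\mu_2>\cdots$ lie in $\Lambda(T)$. Put $M=\bigoplus_nE(\{\mu_n\})H$, which reduces $T$. Then $T|_M$ is diagonal with eigenvalues $\{\mu_n\}$, so $m(T|_M)=\inf_n\mu_n$. This value is not an eigenvalue of $T|_M$, since the $\mu_n$ strictly decrease and no $\mu_n$ equals the infimum. Hence $T|_M$ is not minimum attaining, a contradiction. So $\Lambda(T)$ contains no strictly decreasing sequence, and is therefore well-ordered by $\le$. Next, if $H_c=(\bigoplus_\lambda E(\{\lambda\})H)^\perp\neq\{0\}$, then $H_c$ reduces $T$ and $T|_{H_c}$ has no eigenvalues. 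By the elementary fact, $T|_{H_c}$ is not minimum attaining, again a contradiction. Thus $T$ has pure point spectrum.

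($\Leftarrow$) Let $M\ne\{0\}$ reduce $T$. As in Theorem~\ref{thm:positive}, $M$ commutes with the spectral projections, so $M=\bigoplus_{\lambda\in I}M_\lambda$ with $M_\lambda=M\cap E(\{\lambda\})H\ne\{0\}$ and $\emptyset\ne I\subseteq\Lambda(T)$. Well-ordering by $\le$ gives a least element $\mu^*\in I$. For $x=\sum_\lambda x_\lambda\in M$ we have
\[
\norm{Tx}^2=\sum_{\lambda\in I}\lambda^2\norm{x_\lambda}^2\ge\mu^{*2}\norm x^2,
\]
with equality for any unit vector in $M_{\mu^*}$. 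So $T|_M$ attains $m(T|_M)=\mu^*$.

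For the representation, Remark~\ref{rem:countable} applied to the reversed order shows that $\Lambda(T)$ is countable. It is order-isomorphic, under $\le$, to a countable ordinal $\gamma$, so we can write $\Lambda(T)=\{\mu_i\}_{i<\gamma}$ with $(\mu_i)$ strictly increasing. Pure point spectrum then gives $H=\bigoplus_{i<\gamma}M_i$ with $M_i=N(T-\mu_iI)$, and $T=\bigoplus_{i<\gamma}\mu_iI_{M_i}$. Note that $\mu_i=0$ may occur, necessarily as $\mu_0$.

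The only point needing care is the elementary fact above, namely that minimum attainment for a positive operator forces an eigenvector at $\min\sigma$. Everything else is a direct transcription of the proof of Theorem~\ref{thm:positive}.
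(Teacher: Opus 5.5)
Your proof is correct. The elementary fact (for $S\ge0$ on a non-zero space, $S$ attains $m(S)$ if and only if $m(S)\in\sigma_p(S)$) is justified properly. The rest is a faithful min-for-max transcription of the proof of Theorem~\ref{thm:positive}, including the ordinal indexing obtained by applying Remark~\ref{rem:countable} to $-\Lambda(T)$.

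The paper writes no proof and simply presents the statement as a corollary, evidently of Theorem~\ref{thm:positive}. The shortest way to make it a literal corollary is the reflection behind Proposition~\ref{prop:Mr-beta}. Fix $\delta\ge\norm T$. Then $T$ and $\delta I-T$ have the same reducing subspaces and the same spectral projections, with $E_{\delta I-T}(\{\delta-\lambda\})=E_T(\{\lambda\})$. Hence $\delta I-T$ has pure point spectrum if and only if $T$ does, and $\Lambda(\delta I-T)=\delta-\Lambda(T)$ is well-ordered by $\ge$ if and only if $\Lambda(T)$ is well-ordered by $\le$. Applying Theorem~\ref{thm:positive} to $\delta I-T$ then gives the equivalence at once.

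That route takes one line but relies on the cited Proposition~\ref{prop:Mr-beta}. Its content is essentially your elementary fact: $\norm{(\delta I-T)|_M}=\delta-\min\sigma(T|_M)$, and this is attained exactly when $\min\sigma(T|_M)$ is an eigenvalue of $T|_M$. Your direct argument is self-contained and makes that fact explicit, at the cost of repeating the structure of the proof of Theorem~\ref{thm:positive}.
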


There is an interesting relation between positive $\beta(H)$-class operators and $\mathcal M_r(H)$-class operators. Here we state the result.
\begin{proposition}\label{prop:Mr-beta} \cite[Corollary~4.10]{NR26}
Let $T\ge0$ and $\norm T\le\delta$. Then $T\in\mathcal M_r(H)$ if and only if
$\delta I-T\in\beta(H)$.
\end{proposition}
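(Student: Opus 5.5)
The plan is to reduce to Theorem~\ref{thm:positive} by the affine substitution $S=\delta I-T$, with Corollary~\ref{cor:Mr-positive} supplying the characterization of $\mathcal M_r(H)$ on the other side. First observe that $T$ and $S=\delta I-T$ have exactly the same closed reducing subspaces: a projection commutes with $T$ iff it commutes with $\delta I-T$. Since $0\le T\le\norm T I\le\delta I$, the operator $S$ is positive, so Theorem~\ref{thm:positive} applies to it. The spectral measures correspond via $E_S(B)=E_T(\delta-B)$. Consequently $S$ has pure point spectrum iff $T$ does, and $\Lambda(S)=\delta-\Lambda(T)$.

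The map $t\mapsto\delta-t$ is order reversing. So $\Lambda(S)$ is well-ordered by $\ge$ (no infinite strictly increasing sequence) iff $\Lambda(T)$ contains no infinite strictly decreasing sequence, i.e.\ is well-ordered by $\le$. By Theorem~\ref{thm:positive}, $S\in\beta(H)$ iff $T$ has pure point spectrum with $\Lambda(T)$ well-ordered by $\le$. This is exactly the criterion of Corollary~\ref{cor:Mr-positive} for $T\in\mathcal M_r(H)$, which proves the equivalence.

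If one prefers not to rely on Corollary~\ref{cor:Mr-positive} (it is stated without proof), the argument can be made direct. For a reducing subspace $M$, the restrictions satisfy $S|_M=\delta I_M-T|_M\ge0$. Then
\[
\norm{S|_M}=\sup_{\norm x=1,\,x\in M}\langle Sx,x\rangle=\delta-\inf_{\norm x=1,\,x\in M}\langle Tx,x\rangle=\delta-m(T|_M),
\]
using $m(P)=\inf\langle Px,x\rangle$ for $P\ge0$. The quadratic forms correspond as well: for a positive operator, attaining the norm (respectively, the minimum modulus) is equivalent to the extremal value of $\langle Px,x\rangle$ being attained, i.e.\ to $\norm P$ (respectively $m(P)$) being an eigenvalue. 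Hence $S|_M$ is norm attaining iff $\delta-m(T|_M)$ is an eigenvalue of $S|_M$, iff $m(T|_M)$ is an eigenvalue of $T|_M$, iff $T|_M$ is minimum attaining. Since the families of reducing subspaces coincide, the equivalence follows.

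The only delicate point is this correspondence between attainment and eigenvalues for positive operators. For norms it is the Proposition quoted from \cite{CN}. For minimum modulus the argument is analogous: if $\norm{Px}=m(P)$ with $\norm x=1$, then $\norm{P^{1/2}x}^2\ge m(P)$ and Cauchy--Schwarz force $P^2x=m(P)^2x$, hence $Px=m(P)x$ by positivity. I expect this step to be the main, though mild, obstacle.
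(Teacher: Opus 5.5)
Your proof is correct. The paper gives no argument of its own here: the proposition is simply imported from [NR26, Corollary~4.10], so there is no in-paper route to match, and your write-up works as a self-contained proof.

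Of your two routes, keep the second, direct one. The first route deduces the result from Theorem~\ref{thm:positive} together with Corollary~\ref{cor:Mr-positive}. That corollary, however, is stated without proof. The most natural way to obtain it from Theorem~\ref{thm:positive} is exactly the substitution $T\mapsto\delta I-T$, i.e.\ the present proposition, so the first route risks circularity.

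The direct route avoids this and proves slightly more. Fix a nonzero reducing subspace $M$, which is common to $T$ and $S=\delta I-T$. Then $S|_M\ge0$ because $\norm{T}\le\delta$, so $\norm{S|_M}=\delta-m(T|_M)$. Hence $S|_M$ attains its norm if and only if $T|_M$ attains its minimum modulus. This uses only the eigenvalue criteria for positive operators, not the ordinal description of $\Lambda$. Combined with your first paragraph, it then yields Corollary~\ref{cor:Mr-positive} as a consequence of Theorem~\ref{thm:positive}, rather than the other way round.

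One small point of presentation concerns the minimum-modulus criterion. Once you have $m(P)\le\langle Px,x\rangle\le\norm{Px}\,\norm{x}=m(P)$, the equality case of Cauchy--Schwarz gives $Px=cx$ with $c=\langle Px,x\rangle=m(P)$ immediately. The detour through $P^2x=m(P)^2x$ is therefore unnecessary. It is valid, but it rests on a separate argument, namely $\langle (P^2-m(P)^2)x,x\rangle=0$ with $P^2-m(P)^2\ge0$, followed by $N(P^2-m(P)^2)=N(P-m(P))$ for $P\ge0$.
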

The following result characterizes normal operators in $\mathcal M_r(H)$.
\begin{theorem}\label{thm:Mr-normal} \cite[Theorem~6.7]{NR26}
Let $T\in\cB(H)$ be normal. Then $T\in\mathcal M_r(H)$ if and only if
$T\cong\bigoplus_{i<\gamma}\mu_iU_i$ with $\gamma$ a countable ordinal,
$(\mu_i)_{i<\gamma}$ strictly increasing in $[0,\infty)$ and $U_i$ unitary on
$M_i=N(|T|-\mu_iI)$, $H=\bigoplus_iM_i$.
\end{theorem}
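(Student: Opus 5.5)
The plan is to mirror the proof of Corollary~\ref{cor:normal}, but with the roles of "largest" and "smallest" interchanged. The positive-operator input will be Corollary~\ref{cor:Mr-positive}, which plays the part of Theorem~\ref{thm:positive}. The key structural fact is that for normal $T$ both $T$ and $T^*$ commute with $|T|$. Hence every spectral subspace of $|T|$, and in particular each eigenspace $N(|T|-\mu I)$ including $N(T)=N(|T|)$, reduces $T$.

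\emph{Necessity.} Let $T\in\mathcal M_r(H)$ be normal. First I will show that $|T|\in\mathcal M_r(H)$. Let $M\neq\{0\}$ reduce $|T|$, i.e.\ $M$ is invariant under the spectral projections of $|T|$. This alone does not make $M$ reduce $T$, so I will not argue directly. Instead, take the polar decomposition $T=V|T|$; for normal $T$ the partial isometry $V$ can be chosen unitary and commuting with $|T|$.

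The cleaner route is to avoid the unrestricted claim about $|T|$. By the spectral theorem, write $H=N(T)\oplus N(T)^\perp$. I will prove pure point spectrum of $|T|$ and the well-ordering by $\le$ directly, using only subspaces that reduce $T$, namely spectral subspaces $E(B)H$ of $|T|$.

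For well-ordering: suppose $\mu_1>\mu_2>\cdots$ are eigenvalues of $|T|$. The space $M=\bigoplus_n N(|T|-\mu_n I)$ reduces $T$, and $\norm{T x}=\norm{|T|x}$ there. So $m(T|_M)=\inf\mu_n$, which is not attained. This is a contradiction.

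For pure point spectrum: the continuous part $H_c=E_c H$ of $|T|$ is a spectral subspace and reduces $T$. Minimum attainment of $T|_{H_c}$ at a unit vector $x$ means that $x$ minimizes $\langle |T|^2x,x\rangle$ over unit vectors of $H_c$. Hence $x$ is an eigenvector of $|T|^2|_{H_c}$, and therefore of $|T|$, which contradicts $H_c$ having no eigenvalues. So $H_c=\{0\}$.

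By Remark~\ref{rem:countable} applied to $-\Lambda(|T|)$, the eigenvalues form a strictly increasing family $(\mu_i)_{i<\gamma}$ with $\gamma$ a countable ordinal, and $H=\bigoplus_i M_i$ with $M_i=N(|T|-\mu_iI)$. Each $M_i$ reduces $T$. On $M_i$ with $\mu_i>0$ we have $T^*T=\mu_i^2 I$, so $U_i=\mu_i^{-1}T|_{M_i}$ is an isometry. It is normal as the restriction of a normal operator to a reducing subspace, hence unitary. If $\mu_0=0$, then $T|_{M_0}=0$, and I choose $U_0=I$ (any unitary works).

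\emph{Sufficiency.} Given $T=\bigoplus\mu_iU_i$, $T$ is normal and $|T|=\bigoplus\mu_iI_{M_i}$. Let $M\neq\{0\}$ reduce $T$. Then $M$ reduces $|T|$, which is a function of $T^*T$, and so $M=\bigoplus_{i\in I}(M\cap M_i)$ with $I\neq\emptyset$. Well-ordering by $\le$ gives a least index $i_0\in I$. Any unit vector $x\in M\cap M_{i_0}$ satisfies $\norm{Tx}=\mu_{i_0}\le\norm{Ty}$ for every unit vector $y\in M$. So $T|_M$ attains its minimum modulus.

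The main obstacle is the pure-point-spectrum step. Only subspaces reducing $T$ may be tested, and these are spectral subspaces of $|T|$; the variational argument above, showing that minimizers of $\langle|T|^2x,x\rangle$ are eigenvectors, is what resolves it.
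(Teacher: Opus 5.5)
Your proof is correct. The paper does not prove this statement; it is quoted from \cite{NR26}. The relevant comparison is therefore with the paper's argument for the $\beta(H)$ analogue, Corollary~\ref{cor:normal}, which comes from the transfinite recursion in Theorem~\ref{thm:main}, (a)$\Rightarrow$(c).

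Transposed to $\mathcal M_r(H)$, that route would set $H_i=(\bigoplus_{j<i}M_j)^\perp$, $\mu_i=m(T|_{H_i})$ and $M_i=N(|T|-\mu_iI)$. It would then use minimum attainment on each reducing $H_i$ to see that $\mu_i$ is an eigenvalue of $|T|$ with an eigenvector in $H_i$. The strictly increasing ordinal indexing is built into that construction, and pure point spectrum comes for free because $H$ is exhausted at a countable ordinal.

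You instead mirror the proof of Theorem~\ref{thm:positive}. You verify the two conditions of Corollary~\ref{cor:Mr-positive} for $|T|$ by testing two kinds of spectral subspaces of $|T|$:
a closed sum of eigenspaces along a strictly decreasing sequence of eigenvalues, and the continuous part $H_c$.
These reduce $T$ because $T$ and $T^*$ commute with $|T|$. You then get the ordinal indexing afterwards from Remark~\ref{rem:countable}.

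Your route avoids the recursion and shows exactly which reducing subspaces matter, at the cost of a separate variational argument on $H_c$. The recursive route is more constructive. In both, normality is used only to make the relevant spectral subspaces of $|T|$ reduce $T$ and to upgrade the isometries $\mu_i^{-1}T|_{M_i}$ to unitaries.

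Two cosmetic points.
\begin{enumerate}
\item Delete the abandoned opening (the claim $|T|\in\mathcal M_r(H)$ and the polar decomposition); it is never used.
\item Spell out why $\inf_n\mu_n$ is not attained on $M$. For a unit vector $x=\sum_nx_n$ with $x_n\in N(|T|-\mu_nI)$, one has $\norm{Tx}^2-(\inf_k\mu_k)^2=\sum_n\big(\mu_n^2-(\inf_k\mu_k)^2\big)\norm{x_n}^2>0$, since every $\mu_n>\inf_k\mu_k$ and some $x_n\neq0$.
\end{enumerate}
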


\begin{theorem}\label{thm:Mr-criterion}
Let $T\in\mathcal M_r(H)$ be hyponormal. Then $T$ is normal if and only if $\Area(\sigma(T))=0$.
\end{theorem}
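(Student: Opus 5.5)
The plan is to mirror the proof of Theorem~\ref{thm:criterion}, with Theorem~\ref{thm:Mr-normal} and Lemma~\ref{lem:closure} in their ``$\le$'' versions replacing Corollary~\ref{cor:normal}.

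For the direction ``$\Area(\sigma(T))=0\Rightarrow T$ normal'' nothing about $\mathcal M_r(H)$ is needed. Putnam's inequality \cite{Put} gives $\norm{T^*T-TT^*}\le\frac1\pi\Area(\sigma(T))=0$, so $T^*T=TT^*$.

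For the converse, suppose $T\in\mathcal M_r(H)$ is normal. By Theorem~\ref{thm:Mr-normal} we have $T\cong\bigoplus_{i<\gamma}\mu_iU_i$, where $(\mu_i)_{i<\gamma}$ is strictly increasing in $[0,\infty)$ and each $U_i$ is unitary on $M_i$. Put $K=\{\mu_i:i<\gamma\}$. It is well ordered by $\le$, so by the $\le$-version of Lemma~\ref{lem:closure} the closure $\overline K$ is countable. Note that $K$ is bounded, because $\mu_i=\norm{T|_{M_i}}\le\norm T$.

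Now take $z$ with $|z|\notin\overline K$ and set $d=\operatorname{dist}(|z|,\overline K)>0$. Each $\mu_iU_i$ is normal with $\sigma(\mu_iU_i)\subseteq\mu_i\T$, so
\[
\norm{(\mu_iU_i-z)^{-1}}=\operatorname{dist}(z,\sigma(\mu_iU_i))^{-1}\le\big||z|-\mu_i\big|^{-1}\le d^{-1}.
\]
The case $\mu_i=0$ is covered, since then $U_i$ acts on $N(T)$ and the resolvent there is $-z^{-1}$, with $|z|\ge d$. The direct sum of these resolvents is therefore a bounded operator, and it inverts $T-z$, so $z\notin\sigma(T)$. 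Hence
\[
\sigma(T)\subseteq\{z:|z|\in\overline K\}=\bigcup_{r\in\overline K}r\T,
\]
a countable union of circles (and possibly the point $0$), which has planar measure zero.

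The only step needing care is the uniform resolvent bound. It requires $d>0$, which uses that $\overline K$ is closed. Countability of $\overline K$, rather than of $K$, is what Lemma~\ref{lem:closure} supplies.
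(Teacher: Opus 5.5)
Your proposal is correct and takes essentially the same route as the paper. One direction is Putnam's inequality. The other combines the representation $T\cong\bigoplus_{i<\gamma}\mu_iU_i$ from Theorem~\ref{thm:Mr-normal}, countability of $\overline K$ via the $\le$-version of Lemma~\ref{lem:closure}, and the uniform resolvent bound from the proof of Theorem~\ref{thm:criterion}.

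Your final inclusion $\sigma(T)\subseteq\{z:|z|\in\overline K\}$ is the accurate statement. The paper's proof writes $K$ there, which can fail: take $\mu_i\uparrow 1$ with $U_i=I$, so $1\in\sigma(T)$ but $1\notin K$.
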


\begin{proof}
($\Leftarrow$) Putnam's inequality as in Theorem~\ref{thm:criterion}.

($\Rightarrow$) By Theorem~\ref{thm:Mr-normal}, $T\cong\bigoplus\mu_iU_i$; let
$K=\{\mu_i\}$, countable by Lemma~\ref{lem:closure} (applied to the
set $\{\mu_i\}$, well-ordered by $\le$). Exactly as in the proof of
Theorem~\ref{thm:criterion}, $\sigma(T)\subseteq\{z:|z|\in K\}$, which has area zero.
\end{proof}

\begin{example}
Hyponormality alone does not give a block-diagonal decomposition in $\mathcal M_r(H)$
either. Let $T$ be the weighted shift $Te_n=\alpha_ne_{n+1}$ ($n\ge0$) with
$\alpha_n=2-\frac1{n+1}$. The weights increase, so $T$ is hyponormal.
$P$ reducing commutes with $T^*T=\operatorname{diag}(\alpha_n^2)$, whose entries
are pairwise distinct, so $P$ is diagonal; commuting with $T$ forces the
range of $P$ to be $0$ or $H$. Hence $T$ has no non-trivial reducing subspace, and
$m(T)=\alpha_0=1$ is attained at $e_0$, so $T\in\mathcal M_r(H)$. But the
eigenspaces of $|T|$ are the lines $\mathbb Ce_n$, none of which reduces $T$.
\end{example}

\begin{center}
Acknowledgements
\end{center}
The authors used Claude Sonnet 5.5 (Anthropic) for proof-checking suggestions and language and LaTeX editing. The authors reviewed and verified all content and take full responsibility for the manuscript.

\end{document}